\newtheorem{remark}{Remark}
\newtheorem{theorem}{Theorem}
\newenvironment{proof}{\noindent{\bf Proof.}}%
{\hspace*{\fill}$\Box$\par\vspace{4mm}}
\newcommand{\kn}[2] {#1^{\underline{#2}}}
\newcommand{\col} {\mathbf{c}}
\newcommand{\pr} {{\rm Pr}}
\newcommand{\pra}[1] {\pr\left\{#1\right\}}
\newcommand{\pd}{\mathbb{P}}
\newcommand{\E} {\mathbb{E}}
\newcommand{\pt} {\mathcal{P}}
\newcommand{\Ea}[1] {\E\left(#1\right)}
\newcommand{\var}[1] {{\mathbb{V}ar}\left(#1\right)}
\newcommand{\diagonal}[1] {{\rm diag}(#1)}
\newcommand{\am}[1] {\overline{#1}}
\newcommand{\R} {\mathbb{R}}
\newcommand{\caporali}[1] {``#1''}
\newcommand{\mybreak} {\par\vspace{2mm}\noindent}
\def\cadre{$$\vcenter\bgroup\advance\hsize by -2em\noindent
	\refstepcounter{equation}(\theequation)~\ignorespaces}
\def\endcadre{\egroup\eqno$$\global\@ignoretrue}
\begin{document}

\title{Network homophily via tail inequalities.} 


\author{Nicola Apollonio\footnote{Istituto per le Applicazioni del
		Calcolo, ``Mauro Picone'', Consiglio Nazionale delle Ricerche, Via dei Taurini 19, 00185 Roma, Italy.
		\texttt{nicola.apollonio@cnr.it}.}
	\and 
	Paolo G. Franciosa\footnote{Dipartimento di Scienze Statistiche, Universit\`a di Roma ``La Sapienza'', Piazzale Aldo Moro 5, 00185 Roma, Italy. \texttt{paolo.franciosa@uniroma1.it}.}
	\and 
	Daniele Santoni\footnote{Istituto di Analisi dei Sistemi ed Informatica ``Antonio Ruberti'', Consiglio Nazionale delle Ricerche, Via dei Taurini 19, 00185 - Rome, Italy \texttt{daniele.santoni@iasi.cnr.it}.}
	}

\date{}

\maketitle	



\begin{abstract}
Homophily is the principle whereby \caporali{similarity breeds connections}. We give a quantitative formulation of this principle within networks. Given a network and a labeled partition of its vertices, the vector indexed by each class of the partition, whose entries are the number of edges of the subgraphs induced by the corresponding classes, is viewed as the observed outcome of the random vector described by picking labeled partitions at random among labeled partitions whose classes have the same cardinalities as the given one. This is the recently introduced \emph{random coloring model} for network homophily. In this perspective, the value of any homophily score $\Theta$, namely a non decreasing real valued function in the sizes of subgraphs induced by the classes of the partition, evaluated at the observed outcome, can be thought of as the observed value of a random variable. Consequently, according to the score $\Theta$, the input network is homophillic at the significance level $\alpha$ whenever the one-sided tail probability of observing a value of $\Theta$ at least as extreme as the observed one, is smaller than $\alpha$. Since, as we show, even approximating $\alpha$ is an NP-hard problem, we resort to classical tails inequality to bound $\alpha$ from above. These upper bounds, obtained by specializing $\Theta$, yield a class of quantifiers of network homophily. Computing the upper bounds requires the knowledge of the covariance matrix of the random vector which was not previously known within the random coloring model. In this paper we close this gap. Interestingly, the matrix depends on the input partition only through the cardinalities of its classes and depends on the network only through its degrees. Furthermore all the covariances have the same sign and this sign is a graph invariant. Plugging this structure into the bounds yields a meaningful, easy to compute class of indices for measuring network homophily. As demonstrated in real-world network applications, these indices are effective, reliable, and lead to new discoveries that could not be captured by the current state of the art.
\end{abstract}
\mybreak
{\small\textbf{Keywords}: network homophily, Mahalanobis norm, tail inequalities, graph partitioning, graph invariant, over-dispersed degree distributions.}



\section{Introduction}\label{sec:intro}
Network homophily is the phenomenon, first observed by social scientists, whereby people who share the same characteristics tend to be connected together \cite{mcp}. More abstractly, given a simple undirected graph $G$ (modeling interactions) and a partition $\pt$ of its vertices (modeling a classification based, for instance, on certain characteristics), $G$ is said to be homophilic with respect to $\pt$ if the subgraphs induced by the classes in the partition (i.e., people who presumably share the same characteristics) are significantly dense (thus showing a tendency to connect). What makes this definition vague is the inherent difficulty of explaining what ``significantly dense'' means. The main aim of this paper is to give a precise quantitative meaning to the latter term. A well-established descriptive approach to quantifying network homophily consists in defining a real valued non decreasing function $\Theta$ of the edge densities (or simply of the sizes) of the subgraphs induced by the classes of the given partition $\pt$, and to locate the value of $\Theta$ evaluated at the input on a universal scale. The function $\Theta$ is called \emph{homophily score} or \emph{descriptive homophily index}. One of the simplest descriptive indices is the \emph{homophily ratio} \cite{Zhu_et_al} (called \emph{inside edge fraction} in \cite{chakra}) which is simply defined as the fraction of the \emph{homophillic edges}, namely those edges that are induced by the classes of the partition. Also, by dividing the sum of the edge densities of the subgraphs induced by the classes by the number of classes yields another index in the scale $[0,1]$. Often, the role of the universal scale is replaced by numerical experiments on benchmark instances while homophily indices are taken from a consolidated catalog. We refer the reader to \cite{yangleskove} and \cite{lancichinetti2010} for (among other things) an annotated bibliography on both aspects.  
\mybreak
In this paper we take an inferential statistics approach, namely, we think of the  partitioned input network $(G,\pt)$ as the observed value of a random object in a probability space which induces a probability distribution on its measurable functions and, in particular, on homophily indices. The latter ones are now random variables (statistics) of which we observe a particular realization. Depending on our ability to compute their distributions, a task which is very far from obvious (see next section and Section \ref{sec:applications}), we can perform hypothesis testing, locate confidence intervals or, less ambitiously, simply compare the observed values of these statistics with their expected values under the null model represented by the chosen probability space for the random object $(G,\pt)$.
 We can think of the celebrated \emph{network modularity index} by Newman \cite{newman_m} as probably the most notable instance of this approach. The probability space for $(G,\pt)$ consists of all multigraphs on the set of vertices of $G$ with the same degrees as $G$ sampled according to Bollob\'{a}s' configuration model \cite{bollo} (see next section). The statistic in this case is the random variable 
 $$
 	\frac{1}{m}\sum_i^s\left(M_i-\Ea{M_i}\right) 
$$
where $m$ is the size of $G$, $M_i$ is the random number of edges induced by the $i$-th class of the given partition $\pt$ of $V(G)$ into $s$ classes, and $\Ea{M_i}$ is its expected value under the configuration model. The modularity of $G$ with respect to $\pt$ is simply the value of this statistic in the observed data $(m_1,\ldots,m_s)$ where $m_i$ is the number of observed homophilic edges induced by the vertices of class $i$, $i=1,\ldots,s$. Since network modularity is a non decreasing monotone function of the $m_i$'s, besides its statistical interpretation, it directly quantifies homophily in the scale $[-1/2,1]$. Actually, the range of the index could be a proper sub-interval of $[-1/2,1]$ whose extreme points correspond to the perfectly \emph{anti-homophilic} regime and to the perfectly homophilic regime, respectively. In this sense the scale $[-1/2,1]$ is not universal. However, finding the maximum of modularity over the set of labeled partitions whose classes have prescribed cardinalities, and hence the width of the range of the index over this set, is an NP-complete problem, and the same is true for the homophily ratio (see theorem \ref{thm:nphardmod}).
\mybreak
In this paper, we take a complementary approach: we think of $G$ as given while we think of $\pt$ as a sample from the probability spaces consisting of all labeled partitions of $V(G)$ having the same structure of $\pt$, equipped with the uniform distribution (see \cite{GatesetAnn} and \cite{Hoffmanetal} for alternative probability measures on $\pt$). This is the random coloring model  introduced in \cite{apollonio2022}. The random coloring model is simple enough to derive exact closed formulas for the first two moments of the random variables $M_i$'s which are now defined on the probability space of the labeled partition of the same structure as $\pt$ (see Section \ref{sec:homophily_in_random_models}). On these grounds, we propose a multidimensional extension of the well known three-sigma rule of thumb by providing a Cantelli-type upper bound for the probability of the tails $\left[\Theta(M_1,\ldots,M_s)\geq \Theta(m_1,\ldots,m_s)\right]$ and $\left[\Theta(M_1,\ldots,M_s)\leq \Theta(m_1,\ldots,m_s)\right]$ where $\Theta$ is the homophily score. These bounds are coupled together and yield an index that ranges in $[-1,1]$ which we take as a quantifier of network homophily. Moreover, its absolute value, can be interpreted as an estimated significance of the deviation of the observed homophily from the expected one under the random coloring model. To illustrate the idea, consider the following extremely simplified problem: suppose we want to asses whether a set $C$ of $c$ vertices of a graph $G$ induces a subgraph $H$ which is significantly denser among the subgraphs induced by the same number of vertices; we may think of the edge-density $y$ of $H$ as a sample of the random variable $Y$ described by the density of the subgraph induced by uniform sampling of sets of $c$ vertices from $G$; by the results in \cite{apollonio2022}, we known the expectation $\am{Y}$ and the variance $\sigma^2$ of $Y$; hence we can compute the observed $z$-score defined as $z=(y-\am{Y})/\sigma$; the denser is $H$ the higher is $z$. Well, but how much high $z$ is? How can we assess that what we observe is statistically significant? If we knew the tail probability 
$$\pra{\frac{Y-\Ea{Y}}{\sigma}\geq z}$$
then we could immediately answer: if $y\leq \am{Y}$, then $H$ is not significantly dense at all, while the smaller is the tail probability above, the higher is the significance of the deviation from the expected value. However, already in this simplified problem, the probability above is even hard to approximate (see Section \ref{sec:homophily_in_random_coloring_models}). Here comes into our aid a kind of three-sigma rule of thumb: let 
$$Z=\frac{Y-\Ea{Y}}{\sigma}$$
be the $Z$-score of $Y$, namely the standardized version of $Y$; by Cantelli's inequalities (\eqref{eq:cantelli_r} and \eqref{eq:cantelli_l} in Section \ref{sec:homophily_in_random_coloring_models}) the inequality        
$$\pra{Z\geq z}\leq \frac{1}{1+z^2}$$
holds and tells us that, for $z\geq 0$, the function $z\mapsto \frac{z^2}{1+z^2}=1-\frac{1}{1+z^2}$ is not decreasing in the range $[0,1]$ and that the higher $z$, the higher its significance. By applying the same inequality to $-Z$ we obtain a symmetric bound for the hypo-dense behavior, namely the significance of very large deviation of $Y$ from $\am{Y}$ in the opposite direction; similarly, for $z\leq 0$, the function $z\mapsto -\frac{z^2}{1+z^2}$ is not decreasing in the range $[-1,0]$ and that the higher $|z|$ is, the higher is its significance. Coupling the bounds yields an index in the scale $[-1,1]$ which ranges between the hypo-dense and hyper-dense case. The multidimensional extension of this idea is technically more difficult in that the bound crucially depends on the covariance matrix of the random vector $M$, which was not previously known within the random coloring model. In this paper we close this gap by computing the covariance matrix of subgraph sizes under the random coloring model. Interestingly, the matrix depends on the input partition only through its profile and on the network only through its degree sequence. Furthermore, all the covariances have the same sign and this sign is a graph invariant. 
\mybreak
The knowledge of the covariance structure of the random outcome $M$ allow us to define a class of quantifiers of network homophily by specializing the function $\Theta$. Although this leads to a variety of possibilities to measure network homophily (see Section \ref{sec:H}), what we deem more interesting of our work is the methodological approach which consists in viewing the input partitioned network, as a random object where randomness is inherited from random partitions rather than from random graphs. In principle, this is nothing more than a reasonable modeling choice. However it has some important implications:
\begin{itemize}
	\item[--] we provide an universal scale for our indices; in fact their range is always (and tightly) the interval $[-1,1]$, unlike the ranges of other indices, such as the modularity and homophily ratio indices, whose maxima are difficult to locate, the corresponding optimization problems being typically NP-complete (see Theorem \ref{thm:nphardmod});
	\item[--] due to the simplicity of the random coloring model we gain the possibility of sharper analysis with respect to random graph models; for example, we have complete knowledge of the covariance structure of $M$ which can be exploited to estimate the significance of the observed homophily score via tail inequalities and hence to take this estimated significance as a homophily quantifier.
\end{itemize}
We discuss and substantiate these claims in Section \ref{sec:applications} where we show, for instance, as expected, that the inherent homophilic nature of the Homo sapiens PPI network with respect to the location of corresponding genes on chromosomes is captured neither by the descriptive indices like the homophily ratio nor by Newman's modularity, while it is captured by ours.

The rest of the paper goes as follows. In Section \ref{sec:homophily_in_random_models} the general concept of homophily in random models is formally described introducing the ground of our approach to homophily: the random coloring model; in Section \ref{sec:covariance} we elicit the second-order moment structure of the random outcome $M$ and study its properties including its computational complexity; this results are next exploited in Section \ref{sec:H} where we propose a class of quantifiers of homophily and show that each of them can be efficiently computed; finally in Section \ref{sec:applications} we substantiate our approach through examples and real-world applications. Section \ref{sec:concl} closes the paper with concluding remarks.

\section{Homophily in random models.}\label{sec:homophily_in_random_models} 
Throughout the rest of the paper we use the following standard notation: for a positive integer $k$ the symbol $[k]$ stands for the set $\{1,\ldots,k\}$ of the first $k$ positive integers. The edge-set of a graph $G$ is denoted by $E(G)$. Symbols $K_n$, $P_n$, $K_{1,n}$ and $K_2^{(t)}$ denote the isomorphism classes of the complete graph on $n$ vertices, the path on $n$ vertices, the tree with $n+1$ vertices and $n$ leaves, and the graph consisting of $t$ disjoint edges. The number of copies of $P_3$ in $G$ is denoted by $\pi_3(G)$. Let $G$ be a graph on $[n]$ with $m$ edges. Let $\pt$ be a partition of $[n]$ into $s$ distinguishable classes. Let $c_i$ be the number of vertices in class $i$, $i\in[s]$. The $s$-tuple $\col=(c_1,\ldots,c_s)$ is the \emph{profile} of $\pt$. Since labeled partitions $\mathcal{P}$ of $[n]$ of profile $\col$ correspond bijectively to surjective maps $f:V(G)\rightarrow [s]$ where $f^{-1}(i)=c_i$, for $i\in [s]:=\{1,2,\ldots,s\}$, we can equivalently think of the partitioned network $(G,\mathcal{P})$ as the colored network $(G,f)$ where for each vertex of $G$, $f$ tell us the color of that vertex. Accordingly, we rephrase the definition of homophily by saying \emph{homophillic with respect to $f$} rather than \emph{homophillic with respect to $\mathcal{P}$} and we refer to $\col$ as \emph{the profile of coloring $f$}. Also, we say that an edge of $G$ is $i$-homophilic (or homophilic of color $i$) if both endpoints have color $i$. Let $(m_1,\ldots,m_s)'$---here and henceforth $u'$ denotes the transpose of vector $u\in\mathbb{R}^d$, for some natural number $d$, usually thought of as column vector--- be the observed outcome for the input $(G,\pt)$, where $m_i$ is the number of $i$-homophilic edges under $f$.
\mybreak
There are mainly three options to prescribe randomness for $(G,f)$:
\begin{enumerate}[label=(\alph*)]\itemsep0em
	\item\label{com:a} we may think both $G$ and $f$ as random objects, namely, $\mathcal{G}_n$ is a set of graphs on $[n]$, $\Pi_\col$ is the set of coloring of $[n]$ of profile $\col$, and $\pd_{\mathcal{G}_n,\col}$ is a probability measure on $\mathcal{G}_n\times \Pi_\col$;
	\item\label{com:b} we may think of $G$ as given while of the coloring $f$ as a sample from $\Pi_\col$ according to a probability measure $\pd_\col$ on $\Pi_\col$ ($\pd_\col$ can therefore be viewed as conditional probability); 
	\item\label{com:c} we may think of $f$ as given while the graph $G$ as as sample from the random graph $\mathcal{G}_n$ according to a probability measure $\pd_{\mathcal{G}_n}$ on $\mathcal{G}_n$ ($\pd_{\mathcal{G}_n}$ can therefore be viewed as conditional probability);  
\end{enumerate} 
Since models in \ref{com:a} can be recovered by the knowledge of models in \ref{com:b} and \ref{com:c} via the factorizations 
$$\pd_{\mathcal{G}_n,\col}(G,f)=\pd_{\mathcal{G}_n,\col}(G,f|G)\pd_{\mathcal{G}_n}(G)$$
$$\pd_{\mathcal{G}_n,\col}(G,f)=\pd_{\mathcal{G}_n,\col}(G,f|f)\pd_\col(f),$$
where we identify the marginals of $\pd_{\mathcal{G}_n,\col}$ with $\pd_{\mathcal{G}_n}$ and $\pd_\col$, we may concentrate on the models in \ref{com:b} and \ref{com:c}. Whichever models of \ref{com:a}, \ref{com:b}, and \ref{com:c} applies, 
\begin{itemize}\itemsep0em
	\item[--] $M=(M_1,\ldots M_s)'$ is the \emph{outcome} in the chosen probability space (of which we observe $(m_1,\ldots,m_s)$); $M_i$ is the random variable defined as the number of $i$-homophilic edges;
	\item[--] $\am{M}=(\am{M}_1,\cdots,\am{M}_s)$ with $\am{M}_i=\Ea{M_i}$, $i\in [s]$ and $\Ea{\cdot}$ being the \emph{expectation} under the chosen model, is the the \emph{expected} or \emph{most typical} outcome even if, despite of being typical, it might not belong to the support of $M$ (for instance, because its coordinates might not be integer numbers).  
	\item[--] $\sigma_i^2=\var{M_i}=\Ea{(M_i-\am{M}_i)^2}$ is the variance of $M_i$, $\Sigma=\Ea{(M-\am{M})(M-\am{M})'}$ Is the covariance matrix of $M$ and, provided that $\sigma_i>0$ for all $i\in [s]$, $\Gamma=D^{-\frac{1}{2}}\Sigma D^{\frac{1}{2}}$ is the correlation matrix of $M$, where $D$ is the diagonal matrix with diagonal entries $\sigma^2_1,\ldots,\sigma^2_s$; 
	\item[--] $Z=(Z_1,\ldots,Z_s)$ and $z=(z_1,\ldots,z_s)$ are the $z$- and the observed $z$-scores vectors, where, for $i\in [s]$, $Z_i=(M_i-\am{M_i})/\sigma_i$ and $z_i=(m_i-\am{M_i})/\sigma_i$; notice that the covariance matrix of $Z$ is $\Gamma$.
\end{itemize}
\mybreak
Probably the most notable instance of the model in \ref{com:c} is the celebrated \emph{network modularity index} by Newman \cite{newman_m}---more generally, Newman studies from a quantitative viewpoint the broader phenomenon of \emph{assortative mixing} of which network homophily can be seen as categorical manifestation \cite{newman}---. Indeed the modularity of $G$ with respect to $f$ in the sense of \cite{newman_m} is the index
\begin{equation}\label{eq:utile}
	\mathfrak{q}(f)=\sum_i^s\left(\frac{m_i}{m}-\left(\frac{D_i}{2m}\right)^2\right)
\end{equation}
where $D_i$ is the sum of the degrees of the vertices in the $i$-th color class. To see why the index above fits the model in \ref{com:c} let $M_i$ be the random number of $i$-homophilic edges in a (multi)graph $G$ sampled according to Boll\'{o}bas' configuration model \cite{bollo}---a random (multi)graph with the same degrees as $G$---and observe \cite{newman_m} that $\left(\frac{D_i}{2m}\right)^2$ is the expectation of $M_i/m$ under this model. Therefore $\mathfrak{q}(f)$ takes the form 
\begin{equation}\label{eq:modularity}
\frac{1}{m}\sum_i^s\left(m_i-\Ea{M_i}\right) 
\end{equation}
which expresses $\mathfrak{q}(f)$ as an appropriate multiple of the sum of the deviations of the observed homophillic edges in class $i$ from their expected value under a particular null model (the configuration model in the present case). The reason why one resorts to probability spaces of multigraphs rather than simple graphs is because, unlike the Erd\H{o}s-R\'{e}nyi model $G(n,p)$, the edges of a simple random graph $\mathcal{G}(d)$ on $[n]$ with prescribed degree sequences $d$, are correlated. In fact, even estimating the probability of a single edge can be challenging and that makes analysing $\mathcal{G}(d)$ difficult and more difficult than the case of multigraphs (\cite{GaoO}). However, in principle, the choice of any null model $(\mathcal{G}_n,\pd_{\mathcal{G}_n})$ yields a modularity-like index having the same general structure \eqref{eq:modularity}.
\subsection{The random Coloring Model}\label{sec:homophily_in_random_coloring_models} 
The models in \ref{com:b} are in some sense complementary to those in \ref{com:c}. Choosing for $\pd_\col$ the uniform measure we obtain the recently introduced \emph{random coloring model} for network homophily \cite{apollonio2022} based on implicit ideas in \cite{park2007}. The probability space consists thus of the set  $\Phi_\col$ of all colorings of $G$ of profile $\col$ equipped with the uniform measure
\begin{equation}\label{eq:unif_col}
	\pd_\col=c_1!\cdots c_s!/n!.
\end{equation}
This is the model we adopt in this paper. The random coloring model is simple enough to allow exact closed formula for the first two moments of the random variables $M_i$'s (see \cite{apollonio2022}). In fact, within this model there is a handy representation of the random variables $M_i$'s by means of the Bernoulli random variables $X_i^e$, $e\in E(G)$, $i\in [s]$, defined as the indicators of the event: \emph{edge $e$ is $i$-homophilic}. Indeed,
$$M_i=\sum_{e\in E(G)}X_i^e,$$
so that we have the following expressions for the mean and the variance of $M_i$, $i=1,\ldots,s$ (\cite{apollonio2022})
\begin{align}
	\overline{M_i}&=m\frac{\kn{c_i}{2}}{\kn{n}{2}},\label{eq:media}\\
	\sigma^2_i&=m\frac{\kn{c_i}{2}}{\kn{n}{2}}\left(1-m\frac{\kn{c_i}{2}}{\kn{n}{2}}\right)+2\left\{\left(\frac{\kn{c_i}{3}}{\kn{n}{3}}-\frac{\kn{c_i}{4}}{\kn{n}{4}}\right)\pi_3(G)+\frac{\kn{c_i}{4}}{\kn{n}{4}}\left(m \atop 2\right)\right\}\label{eq:varianza}
\end{align}
where, following the notation in \cite{knu}, for a positive integer $a$ and a nonnegative integer $q$, we  denote by the symbol $\kn{a}{q}$ the \emph{$q$-th falling factorial} of $a$, namely $\kn{a}{q}=a(a-1)\cdots(a-q+1)$, with $\kn{a}{0}=1$. Thus, $\kn{a}{2}=a(a-1)$.
After setting $\kn{\col}{2}=(\kn{c_1}{2},\ldots,\kn{c_1}{2})$, the most typical outcome in the random coloring model with parameters $\col$ and $n$ reads as 
$$\overline{M}=\frac{m}{\kn{n}{2}}\kn{\col}{2},$$
namely, the most typical outcome corresponds to a (possibly unattainable) coloring having the property that each color class induces a subgraph having the same density as input graph. In \cite{apollonio2022} it is shown that both the marginal mean $\am{M_i}$ and the marginal variance $\sigma_i^2$ can be computed in optimal $O(s+m)$ worst-case time. It is instructive to compute this vector for the complete graph on $n$ vertices and a given profile $\col$. It is clear that, for any coloring $g\in \Phi_{\col}$, the number of edges in the subgraph induced by color $i$, is exactly $\kn{c_i}{2}/2$. Hence $M$ is constant on $\Phi_{\col}$ and $\overline{M}=\kn{\col}{2}/2$. On the other hand, the density of the complete graph is 1 while the coefficient of $\kn{\col}{2}$ in the representation of $\overline{M}$ is half this density. Therefore, by applying the formula, we recover the same result. Using the corresponding formula, the reader may check that $\sigma^2_i=0$, for all $i\in [s]$. Notice that $\pi_3(K_n)=\kn{n}{3}/2$. 
\paragraph{Homophily in the random coloring model.}
The null hypothesis behind the random coloring model is the absence of homophily, namely the coloring $f$ does not correlate with $G$. This means that the effect of $f$ on $G$, measured by some monotone function $\Theta$ of the number of homophilic edges, should be statistically the same as the effect of any other (equally likely) coloring of $G$ with the same profile as $f$. We aim at devising an index, in the form of a non decreasing function of the number of homophilic edges, such that 
\begin{enumerate}[label={\rm (\roman*)}]\itemsep0em
\item\label{com:1}on the one hand it locates the input $(G,f)$ in the universal scale $R$ (i.e., independent on the particular input graph)  whose extreme values correspond to the anti-homophilic and perfect homophilic behavior, respectively. Note that locating the range of the modularity  value or homophily ratio index is a far from  obvious problem (see Theorem \ref{thm:nphardmod}), since it requires locating their maxima over the labeled partitions of the given profile,
\item\label{com:2}and on the other hand, values of the index are measures of the significance of the deviation of the number of homophilic edges from their expected value under the random coloring model.  
\end{enumerate}   
Suppose for the moment that we know the distribution $M$ induced by $\pd_\col$. Consider the arithmetic mean of the $z$ scores 
$$A:=A(Z_1,\ldots,Z_s)=\frac{1}{s}\sum_i^sZ_i.$$ 
This amounts to choose the statistic $\Theta$ as a function of $(M_1,\ldots,M_s)$ as
$$\Theta:\R^s\rightarrow \R,\quad x\mapsto \frac{1}{s}\sum_i^s \frac{x_i-\am{M}_i}{\sigma_i}$$
which is trivially an affine non decreasing function of its arguments. The index
\begin{equation}\label{eq:index_a}
	\mathfrak{a}(G,f)=\left\{
	\begin{array}{rr}
		1-\pra{A(Z)\geq A(z)} & \text{if $A(z)\geq 0$}\\
		\pra{A(Z)\leq A(z)}-1& \text{if $A(z)< 0$}\\
	\end{array}
	\right.
\end{equation}  
ranges in $[-1,1]$ and meets the requirements in \eqref{com:1} and \eqref{com:2}.
Unfortunately, knowing the distribution of $M$ is a very unlikely case. Indeed, a straightforward reduction to CLIQUE shows that given a certain positive integer $\kappa$ even deciding whether the marginal tail probability $\pra{M_1\geq \kappa}$ is positive or not is an NP-complete problem: just choose for $\kappa$ the number $c_1(c_1-1)/2$ where $c_1$ is the number of vertices of color $1$: $G$ has a clique on $c_1$ vertices if and only if $\pra{M_1\geq \kappa}>0$. Therefore, since CLIQUE is not even in the APX class, it follows that the probabilities in \eqref{eq:index_a} are even hard to approximate. While this circumstance is very frustrating, not being able to calculate the probability of a given event exactly, is a rather common event in Probability and Statistics. The usual remedy is to resort to so-called tail inequalities. The typical couple of one-sided tail inequalities for the random variable $\Psi(Z)$, where $\Psi: \R^s\rightarrow \R$, read as
$$
\left\{
\begin{array}{rr}
	\pra{\Psi(Z)\geq \Psi(z)}\leq B(\Psi(z)) & \text{if $\Psi(z)>0$}\\
	\pra{\Psi(Z)\leq \Psi(z)}\leq B(\Psi(z)) & \text{if $\Psi(z)<0$}\\
\end{array}
\right.
$$ 
where $B$ is a non non-negative real valued function of its argument which depends in general on the mixed moments of the random vector $Z=(Z_1,\ldots,Z_s)'$. For our purposes, the most interesting concentration inequality is the celebrated Cantelli's inequality. This inequality requires the knowledge of covariance matrix of $M$ which was not previously known and for which we give a closed form expression (see Theorem \ref{thm:inv_sigma}) in the next section which depends on $G$ only through its degrees and on $f$ only through its profile. Let us illustrate how we can mimicking index $\mathfrak{a}$ via tail inequalities, assuming the knowledge of the covariance structure of $M$. Consider classical uni-variate Cantelli's inequalities for a random variable $X$ with finite expectation and finite variance $\sigma^2$. For a positive real number $t$, these inequalities are 
\begin{align}
	\pra{X-\Ea{X}\geq t}\leq \frac{\sigma^2}{t^2+\sigma^2}\label{eq:cantelli_r}\\
	\pra{X-\Ea{X}\leq -t}\leq \frac{\sigma^2}{t^2+\sigma^2}\label{eq:cantelli_l}
\end{align}
Notice, the symmetry in the bounds which implies that both the left and the right tail are approximated in the same way. The more symmetric the distribution of $X-\Ea{X}$ is, the more effective these bounds are.  Since $A(Z)$ is a centered univariate random variable, with finite variance
$$\var{A(Z)}=\Ea{\left[\frac{\mathbf{1}'Z}{s}\right]^2}=\Ea{\frac{\mathbf{1}'ZZ'\mathbf{1}}{s^2}}=\frac{\mathbf{1}'\Ea{ZZ'}\mathbf{1}}{s^2}=\frac{\mathbf{1}'\Gamma\mathbf{1}}{s^2}$$ where $\Gamma$ is the covariance matrix of $Z$ and $\mathbf{1}$ is the $s$-dimensional all ones vector, the bounds above apply. Therefore, the index
\begin{equation}\label{eq:hgfa}
	a(G,f)=\text{sgn}(A(z))\frac{(sA(z))^2}{(sA(z)^2)+\mathbf{1}'\Gamma\mathbf{1}}
\end{equation}
where $z$ is the observed $z$-score and $\text{sgn}$ is the signum function, ranges in $[-1,1]$, meets the requirements in \eqref{com:1} and (approximately) the requirement in \eqref{com:2}. In view of the hardness result, this is the best we can hope for by pursuing this approach.

\section{Covariance structure of subgraph densities}\label{sec:covariance}
We saw in the previous section that the covariance matrix of the outcome of the random coloring model, crucially enters the upper bounds on the probability of deviating from the most typical outcome and finally in the estimates of $\mathfrak{a}$. This section is devoted to the study of the covariance structure of $M$ and this completes the knowledge about the second-order moments of $M$ started in \cite{apollonio2022}. 

\begin{theorem}\label{thm:gamma}
	Let $(G,f)$ be the input graph in the random coloring model. Let $G$ have $n$ vertices and $m$ edges and $f$ have profile $\col$. Then, for all $i,\,j$, $i\not=j$
	$${\rm cov}(M_i,M_j)=\kn{c_i}{2}\kn{c_j}{2}\left[\frac{2}{\kn{n}{4}}\left\{{m \choose 2}-\pi_3(G)\right\}-\left(\frac{m}{\kn{n}{2}}\right)^2\right],$$
	which can be written as 
	\begin{equation}\label{eq:scalar_cov}
		{\rm cov}(M_i,M_j)=\gamma(G)\kn{c_i}{2}\kn{c_j}{2}	
	\end{equation}
	where
	\begin{equation}\label{eq:gamma}
		\gamma(G)=\frac{2}{\kn{n}{4}}\left\{{m \choose 2}-\pi_3(G)\right\}-\left(\frac{m}{\kn{n}{2}}\right)^2
	\end{equation}
	depends only on $G$ through its density and the distribution of its degrees.
\end{theorem}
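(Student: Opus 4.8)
The plan is to exploit the Bernoulli representation $M_i=\sum_{e\in E(G)}X_i^e$ recalled in Section~\ref{sec:homophily_in_random_coloring_models}, where $X_i^e$ indicates that edge $e$ is $i$-homophilic, and to expand the covariance bilinearly:
$$
{\rm cov}(M_i,M_j)=\sum_{e\in E(G)}\sum_{f\in E(G)}{\rm cov}\bigl(X_i^e,X_j^f\bigr).
$$
The crucial observation, valid because $i\neq j$, is that two edges sharing a vertex can never be one $i$-homophilic and the other $j$-homophilic, since the shared vertex would need two distinct colors; hence $\Ea{X_i^e X_j^f}=0$ unless $e$ and $f$ are vertex-disjoint, and ${\rm cov}(X_i^e,X_j^f)=-\Ea{X_i^e}\Ea{X_j^f}$ whenever $e$ and $f$ meet (this includes the diagonal terms $e=f$). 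Consequently the computation splits into two parts: summing $\Ea{X_i^e X_j^f}$ over the ordered pairs of disjoint edges, and subtracting $\sum_{e,f}\Ea{X_i^e}\Ea{X_j^f}=m^2\,\kn{c_i}{2}\kn{c_j}{2}/(\kn{n}{2})^2$, the latter using $\Ea{X_i^e}=\kn{c_i}{2}/\kn{n}{2}$ (as in~\eqref{eq:media}).

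For the first part I would first evaluate a single term. For disjoint edges $e=\{a,b\}$ and $f=\{c,d\}$ (four distinct vertices), a direct count under the uniform measure~\eqref{eq:unif_col} gives the probability that $a,b$ both receive color $i$ and $c,d$ both receive color $j$,
$$
\Ea{X_i^e X_j^f}=\frac{c_i}{n}\cdot\frac{c_i-1}{n-1}\cdot\frac{c_j}{n-2}\cdot\frac{c_j-1}{n-3}=\frac{\kn{c_i}{2}\kn{c_j}{2}}{\kn{n}{4}},
$$
which is independent of the chosen pair of disjoint edges. So the first part reduces to counting the ordered pairs of disjoint edges of $G$. Of the $m^2$ ordered pairs of edges, exactly $m$ are of the form $(e,e)$, while the ordered pairs of distinct edges sharing a vertex are in bijection with twice the set of copies of $P_3$ in $G$ — each copy, say with edges $\{u,v\}$ and $\{v,w\}$, contributing the ordered pairs $(\{u,v\},\{v,w\})$ and $(\{v,w\},\{u,v\})$ — hence there are $2\pi_3(G)$ of them. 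Therefore the number of ordered pairs of disjoint edges equals $m^2-m-2\pi_3(G)=2\bigl[{m\choose 2}-\pi_3(G)\bigr]$.

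Assembling the two parts,
$$
{\rm cov}(M_i,M_j)=2\Bigl[{m\choose2}-\pi_3(G)\Bigr]\frac{\kn{c_i}{2}\kn{c_j}{2}}{\kn{n}{4}}-m^2\,\frac{\kn{c_i}{2}\kn{c_j}{2}}{(\kn{n}{2})^2},
$$
and factoring out $\kn{c_i}{2}\kn{c_j}{2}$ yields ${\rm cov}(M_i,M_j)=\gamma(G)\kn{c_i}{2}\kn{c_j}{2}$ with $\gamma(G)$ exactly as in~\eqref{eq:gamma}. For the final sentence of the statement, recall that $\pi_3(G)=\sum_{v}\binom{d_v}{2}$ is a function of the degree sequence alone and that $n$ and $m$ fix the density $m/\binom n2$; thus $\gamma(G)$ depends on $G$ only through $n$, its density, and its degree distribution.

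I do not expect a genuine obstacle. The two points that require care are the vanishing of $\Ea{X_i^e X_j^f}$ on incident edges when $i\neq j$ (this restriction to disjoint pairs is precisely what makes $\pi_3(G)$ enter the formula), and the clean bookkeeping $m^2=\#\{\text{ordered disjoint pairs}\}+m+2\pi_3(G)$ without double counting. An alternative, more uniform route is to compute $\Ea{M_iM_j}$ directly as a sum over ordered pairs of disjoint edges; I would nonetheless present the indicator-expansion version, since it runs parallel to the variance computation of~\cite{apollonio2022} recalled in~\eqref{eq:varianza}.
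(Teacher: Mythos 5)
Your proposal is correct and follows essentially the same route as the paper: both expand $\Ea{M_iM_j}$ via the indicator representation, observe that $X_i^eX_j^{e'}$ vanishes on incident (and equal) edges when $i\neq j$, evaluate the common term $\kn{c_i}{2}\kn{c_j}{2}/\kn{n}{4}$ on disjoint pairs, and count the ordered disjoint pairs as $2\bigl[{m\choose2}-\pi_3(G)\bigr]$ before subtracting $\Ea{M_i}\Ea{M_j}$. The bookkeeping $m^2=\#\{\text{ordered disjoint pairs}\}+m+2\pi_3(G)$ and the identity $\pi_3(G)=\sum_v\binom{d_v}{2}$ are both handled correctly.
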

\begin{proof}
	Let $i\in[ s]$ be any color. Edge $e\in E(G)$ is $i$-monochromatic under a coloring $g$, if both end-vertices of $e$ have color $i$. Let $X^e_i$ be the Bernoulli random variable defined as the indicator of the event: \emph{edge $e$ is $i$-monochromatic}. Recall from Section \ref{sec:homophily_in_random_coloring_models}  that for every color $i$, $M_i$ has the representation $M_i=\sum_{e\in E(G)}X^e_i$.  Let now $i,\,j\in [s]$ be two distinct colors. By definition ${\rm cov}(M_i,M_j)=\Ea{M_iM_j}-\Ea{M_i}\Ea{M_j}$ and by \eqref{eq:media} we already know $\Ea{M_i}\Ea{M_j}$. The rest of the proof follows by expanding $\Ea{M_iM_j}$:
	$$\Ea{M_iM_j}=\Ea{\left(\sum_{e\in E(G)}X^e_i\right)\left(\sum_{e'\in E(G)}X^{e'}_j\right)}.$$
	By expanding the product inside the expectation, we obtain the polynomial expression
	$$\sum_{(e,e')\in E(G)\times E(G)}X^e_iX^{e'}_j.$$
	Since no $i$-monochromatic edge $e$ can share a vertex with a $j$-monochromatic edge $e'$, we have the relations 
	$$X^e_iX^{e'}_j=0 \quad \forall (e,e')\in E(G)\times E(G)\quad\text{such that}\quad e\sim e'$$  
	where, for edges $e$ and $e'$ (with possibly $e=e'$) we write $e\sim e'$ if $e$ and $e'$ have at least a vertex in common. Therefore, by linearity 
	$$\Ea{M_iM_j}=\sum_{e\not\sim e'}\Ea{X^e_iX^{e'}_j}.$$
	Now $\Ea{X^e_iX^{e'}_j}=\pra{X^e_i=1, X^{e'}_j=1}$ because the variables  involved are Bernoulli and, if $e\not\sim e'$, then 
	$$\pra{X^e_i=1, X^{e'}_j=1}=\frac{(n-4)!}{(c_i-2)!(c_j-2)!}\left(\frac{n!}{c_i!c_j!}\right)^{-1}=\frac{\kn{c_i}{2}\kn{c_j}{2}}{\kn{n}{4}}.$$
	Since $\pra{X^e_i=1, X^{e'}_j=1}$ does not depend on the pair $(e,e')$, it follows that 
	$\Ea{M_iM_j}$ is $\frac{\kn{c_i}{2}\kn{c_j}{2}}{\kn{n}{4}}$ times the number of ordered pairs $(e,e')$ with no vertex in common and this number is twice ($=2!$) the difference between the number ${m \choose 2}$ of unordered pairs of edges and the number of pairs sharing exactly one vertex. Since the latter number is precisely the number $\pi_3(G)$ of paths of $G$ with two edges we obtain  
	$$\Ea{M_iM_j}=2\frac{\kn{c_i}{2}\kn{c_j}{2}}{\kn{n}{4}}\left\{{m \choose 2}-\pi_3(G)\right\}.$$
	In view of \eqref{eq:varianza}, after subtracting $\Ea{M_i}\Ea{M_j}$ from the expression above we obtain the required expression. Now \eqref{eq:scalar_cov} and \eqref{eq:gamma} follow by trivial manipulations.
\end{proof}

Notice that for $i=j$, the expression above does not specialize to the variance given in \eqref{eq:varianza}. However, what is remarkable is that the covariances have the same sign and this sign depends only on $G$ and not on the probability space. Therefore the sign of $\gamma(G)$ is a graph invariant. Interestingly we have:
\begin{theorem}\label{thm:inv_sigma}
	Let $\mathcal{G}$ be a class of graph whose members have the same number of edges and vertices and hence the same edge-density. Then $\gamma(G)$ is maximized on $\mathcal{G}$ by the graphs having minimum possible sum of squares of the degrees, and it is minimized on $\mathcal{G}$ by those graphs having maximum possible sum of squares of the degrees.  
\end{theorem}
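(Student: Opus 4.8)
The plan is to isolate, within the closed form \eqref{eq:gamma}, the one quantity that varies as $G$ ranges over $\mathcal{G}$. Fixing $n$ and $m$, the summands $\frac{2}{\kn{n}{4}}\binom{m}{2}$ and $\bigl(\frac{m}{\kn{n}{2}}\bigr)^2$ are constants on $\mathcal{G}$, so $\gamma(G)$ equals a constant minus $\frac{2}{\kn{n}{4}}\,\pi_3(G)$, i.e. a strictly decreasing affine function of $\pi_3(G)$ (the coefficient $-\frac{2}{\kn{n}{4}}$ being negative whenever $n\ge 4$; for $n\le 3$ a graph cannot contain two vertex-disjoint edges and the statement is vacuous). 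Hence maximizing $\gamma$ over $\mathcal{G}$ is the same as minimizing $\pi_3(G)$, and minimizing $\gamma$ the same as maximizing $\pi_3(G)$.

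Next I would rewrite $\pi_3(G)$ through the degree sequence. As already used in the proof of Theorem \ref{thm:gamma}, $\pi_3(G)$ is the number of pairs of edges of $G$ meeting in exactly one vertex; counting such pairs by their common (center) vertex gives $\pi_3(G)=\sum_{v\in V(G)}\binom{d_v}{2}$. Expanding the binomial and invoking the handshake identity $\sum_{v}d_v=2m$ yields $\pi_3(G)=\tfrac12\sum_{v}d_v^2-\tfrac12\sum_{v}d_v=\tfrac12\sum_{v}d_v^2-m$, so with $m$ fixed $\pi_3(G)$ is a strictly increasing affine function of $\sum_{v}d_v^2$.

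Composing the two observations, $\gamma(G)$ is a strictly decreasing affine function of $\sum_{v\in V(G)}d_v^2$ on $\mathcal{G}$; therefore it attains its maximum precisely at the members of $\mathcal{G}$ whose degree sequence has the smallest sum of squares and its minimum precisely at those whose degree sequence has the largest sum of squares, which is the assertion. (Equivalently, since the degree mean $2m/n$ is also constant on $\mathcal{G}$, $\gamma$ is monotone decreasing in the variance of the degree distribution.) There is no genuine obstacle: the argument is a one-line substitution followed by a monotonicity remark; the only points meriting a word of care are the degenerate regime $n\le 3$, and the fact that ``minimum/maximum possible'' is meant relative to the prescribed class $\mathcal{G}$, i.e. relative to whichever degree sequences are realized by graphs with $n$ vertices and $m$ edges lying in $\mathcal{G}$.
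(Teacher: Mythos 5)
Your argument is correct and is essentially the paper's own proof: reduce the optimization of $\gamma$ to that of $\pi_3(G)$ via \eqref{eq:gamma}, then express $\pi_3(G)$ as an increasing affine function of $\sum_v d_v^2$ using $\pi_3(G)=\sum_v\binom{d_v}{2}$. Your identity $2\pi_3(G)=\sum_v d_v^2-2m$ is the correct one (the paper writes $+2m$, a harmless sign slip since $m$ is constant on $\mathcal{G}$), and your remarks on the degenerate case $n\le 3$ only add care.
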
\label{coro:stars_paths}
\begin{proof} By \eqref{eq:gamma}, $\gamma(G)$ is maximized (minimized) over $\mathcal{G}$, by the graphs $G$ for which which $\pi_3(G)$ is minimum (maximum). Let $d_G=(d_1,\ldots,d_n)$ be the degree sequence of a graph $G$ and observe that $2\pi_3(G)=\sum_id_i^2+2m$. Therefore $G^*$ maximizes (minimizes) $\gamma(G)$ over $\mathcal{G}$ if and only if $d_{G^*}$ minimizes (maximizes) $\sum_id_i^2$ over the same set. 
\end{proof}
As an example, let us consider the case of trees. Since among trees $\sum_id_i^2$ is maximized by stars, while trees that minimize $\sum_id_i^2$ are paths, we have explicitly:
$$\gamma(K_{1,n})=-\frac{1}{\left(n+1\right)^2},\qquad\text{and}\qquad \gamma(P_n)=\frac{1}{n^2(n-1)}.$$ 
We can also express $\gamma(G)$ in terms of the first two moments of the degree distribution of $G$. Let $\delta_1$ and $\delta_2$ be the first two moments of the degree distribution of $G$---recall that degree distribution of $G$ is the function which associates each integer $t\in\{0,1,\ldots,n-1\}$ with the fraction of vertices having $t$ neighbors in $G$---. One has $\delta_1=2m/n$ and $\delta_2$ is the sum of the squares of the degrees divided by $n$. After simple but tedious manipulations it follows that
$$\gamma(G)=\frac{n}{\kn{n}{4}}\left(\frac{2n-3}{2n-2}\delta_1^2+\frac{1}{2}\delta_1-\delta_2\right)$$
therefore, 
$$ \gamma(G)\leq 0 \Longleftrightarrow \frac{\delta_2-\delta_1^2}{\delta_1}\geq \frac{1}{2}\left(1-\frac{\delta_1}{n-1}\right)$$
After setting $\upsilon(G)=\frac{\delta_2-\delta_1^2}{\delta_1}$ and denoting by $\rho(G)$ the edge density of $G$, the relation above reads as
$$ \gamma(G)\leq 0 \Longleftrightarrow \upsilon(G)\geq \frac{1-\rho(G)}{2}$$
and relates the sign of the correlation of the marginal of $M$ (or of $Z$) to the so-called \emph{index of dispersion} of the degree distribution of $G$. This index is defined as the ratio between the variance and the mean of a given distribution. The index of dispersion of the degree distribution of $G$ is thus precisely $\upsilon(G)$. Since for any graph $G$, it holds that $0\leq\frac{1-\rho(G)}{2}< 1$, it follows that values of the index larger than one qualify over-dispersed distributions while values of the index smaller than one qualify under-dispersed distributions. Hence networks with over-dispersed degree distribution yield negatively correlated marginals while regular graphs yield  positively correlated marginals.
\mybreak
Besides its interesting combinatorial meaning, invariant $\gamma(G)$ implies a strong linear algebraic structure for the covariance matrix of $M$ and this structure has striking algorithm consequences for computing Cantelli's bounds our homophily quantifiers (to be defined in the next section) rely on. Recall that a \emph{$Z$-matrix} is a square matrix $A$ over the real field whose off-diagonal elements are nonpositive and that an \emph{$M$-matrix} is a nonsingular $Z$-matrix  whose real eigenvalues are positive \cite{fiedler}. Clearly every symmetric positive definite $Z$-matrix is an $M$-matrix. $M$-matrices are characterized in several different ways (see \cite{fiedler} and \cite{horn}). For our purposes the most interesting such characterization is the following.
\begin{theorem}[\cite{fiedler}]
	A $Z$-matrix is an $M$-matrix if and only if it has a nonnegative inverse. 
\end{theorem}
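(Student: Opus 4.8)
The plan is to reduce the statement to the Perron--Frobenius theory of nonnegative matrices through the classical splitting $A = sI - B$, where $s$ is a positive scalar chosen larger than every diagonal entry of $A$ and $B := sI - A$. Such a splitting is available precisely because $A$ is a $Z$-matrix: the off-diagonal entries of $B$ are $-a_{ij}\ge 0$ automatically, and its diagonal entries $s - a_{ii}$ are positive by the choice of $s$, so $B\ge 0$ entrywise. Two standard facts will do all the work: (i) by Perron--Frobenius, the spectral radius $\rho(B)$ is itself an eigenvalue of $B$, with a nonnegative, nonzero eigenvector $x$; and (ii) whenever $\rho(B) < s$, the Neumann series $A^{-1} = (sI-B)^{-1} = s^{-1}\sum_{k\ge 0}(B/s)^k$ converges and, being a sum of nonnegative matrices scaled by $s^{-1}>0$, satisfies $A^{-1}\ge 0$. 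The whole theorem thus collapses to the chain of equivalences ``$A$ is an $M$-matrix $\iff$ $\rho(B) < s$ $\iff$ $A^{-1}\ge 0$''.

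For the implication ``$M$-matrix $\Rightarrow$ $A^{-1}\ge 0$'': since the eigenvalues of $A$ are exactly the numbers $s-\mu$ with $\mu\in\sigma(B)$, fact (i) tells us that $s-\rho(B)$ is a \emph{real} eigenvalue of $A$; because $A$ is an $M$-matrix its real eigenvalues are positive, so $s-\rho(B) > 0$, i.e. $\rho(B) < s$, and fact (ii) finishes it.

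For the converse ``$A^{-1}\ge 0$ $\Rightarrow$ $M$-matrix'': $A$ is nonsingular and a $Z$-matrix by hypothesis, so it remains only to check positivity of its real eigenvalues, and for that it again suffices to prove $\rho(B) < s$. Take the Perron eigenvector $x\ge 0$, $x\ne 0$ of fact (i), so $Bx = \rho(B)x$ and hence $Ax = (s-\rho(B))x$; applying $A^{-1}$ gives $(s-\rho(B))\,A^{-1}x = x$. Now $A^{-1}x\ge 0$ because $A^{-1}\ge 0$ and $x\ge 0$. If $s-\rho(B) < 0$ the left-hand side is $\le 0$ entrywise while the right-hand side $x$ is $\ge 0$ and $\ne 0$, a contradiction; if $s-\rho(B) = 0$ then $Ax = 0$ with $x\ne 0$, contradicting nonsingularity of $A$. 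Hence $s-\rho(B) > 0$, every real eigenvalue $s-\mu$ of $A$ is at least $s-\rho(B) > 0$, and $A$ is an $M$-matrix.

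The only step with any genuine content is the sign argument in the converse: it is exactly the place where the hypothesis $A^{-1}\ge 0$ is used in full strength (mere nonsingularity of $A$ would not suffice), and it hinges on pairing the nonnegativity of $A^{-1}x$ with the rigidity of the scalar identity $(s-\rho(B))\,A^{-1}x = x$. Everything else --- the existence of the splitting, the appeal to Perron--Frobenius to obtain $\rho(B)\in\sigma(B)$ together with a nonnegative eigenvector, and the Neumann-series expansion --- is routine once the nonnegative-matrix machinery is on the table.
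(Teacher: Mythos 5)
Your proof is correct: the regular splitting $A=sI-B$ with $s$ exceeding the diagonal entries, the Perron--Frobenius eigenpair for the nonnegative matrix $B$, and the Neumann series $(sI-B)^{-1}=s^{-1}\sum_{k\ge 0}(B/s)^k$ together give both implications, and your sign argument in the converse (using $(s-\rho(B))A^{-1}x=x$ with $A^{-1}x\ge 0$) correctly rules out $s-\rho(B)\le 0$. Note that the paper itself offers no proof of this statement --- it is quoted as a known theorem from Fiedler's book --- so there is nothing to compare against; what you have written is the standard textbook derivation of this characterization of $M$-matrices, and it is consistent with the paper's definition of an $M$-matrix as a nonsingular $Z$-matrix whose real eigenvalues are positive.
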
 
The following theorem essentially recognizes the rich structure of the covariance matrix of $M$ while the methodological implications are given in the next section. 
\begin{theorem}\label{thm:cov_mat}
	Let $(G,f)$ be the input pair in the random coloring model with $G$ and $f$ as in Theorem \ref{thm:gamma}. 
	\begin{enumerate}[label={\rm (\roman*)}]\itemsep0em
		\item\label{com:i} The covariance matrix $\Sigma$ of the outcome $M$ of the random coloring model $(\Phi_{\col},\mathbb{P}_{\col})$ is of the form
		$$\Sigma=Q+\gamma(G)\kn{\col}{2}{\kn{\col}{2}}',$$	
		where $Q=\diagonal{q_1,\ldots,q_s}$ with  
		$$q_i=\sigma^2_i-\gamma(G)\left(\kn{c_i}{2}\right)^2$$ 
		where $\sigma^2_i$ is given in \eqref{eq:varianza} and $\gamma(G)$ in Theorem \ref{thm:gamma}. Hence $\Sigma$ is a rank 1 update of the diagonal matrix $Q$. If $Q$ is invertible, so is $\Sigma$ and by the Sherman-Morrison formula, $\Sigma^{-1}$ is of the form
		$$\Sigma^{-1}=\diagonal{q_1^{-1},\ldots,q_s^{-1}}-\frac{\gamma(G)}{1+\gamma(G)a'Qa}aa',$$
		where $a=Q^{-1}\kn{\col}{2}$.
		\item\label{com:ii} If $\gamma(G)\leq 0$, then $\Sigma_S$ is an $M$-matrix for all principal minors $\Sigma_S$ with $S\not=\emptyset$ while if $\gamma(G)\geq 0$, then  $(\Sigma^{-1})_S$ is an $M$-matrix for all principal minors $(\Sigma^{-1})_S$ with $S\not=\emptyset$.
	\end{enumerate}	
		The same conclusions in \ref{com:i} and \ref{com:ii} hold when $\Sigma$ is replaced by the correlation matrix $\Gamma$.
\end{theorem}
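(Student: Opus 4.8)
The plan is to get part~\ref{com:i} by reading the decomposition straight off Theorem~\ref{thm:gamma} and then inverting a diagonal-plus-rank-one matrix, and to get part~\ref{com:ii} from a sign analysis of the off-diagonal entries of $\Sigma$ (resp.\ of $\Sigma^{-1}$) together with the fact that, under the definition of $M$-matrix used here, a nonsingular symmetric $Z$-matrix with positive eigenvalues is automatically an $M$-matrix; so once the sign pattern is under control, positive-definiteness does the rest.

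\emph{Part~\ref{com:i}.} By Theorem~\ref{thm:gamma} the $(i,j)$ off-diagonal entry of $\Sigma$ is $\gamma(G)\kn{c_i}{2}\kn{c_j}{2}$, which is exactly the $(i,j)$ entry of the rank-one matrix $\gamma(G)\,\kn{\col}{2}(\kn{\col}{2})'$; hence $\Sigma-\gamma(G)\,\kn{\col}{2}(\kn{\col}{2})'$ is diagonal, with $i$-th diagonal entry $\sigma_i^2-\gamma(G)(\kn{c_i}{2})^2=q_i$, which is the asserted identity $\Sigma=Q+\gamma(G)\,\kn{\col}{2}(\kn{\col}{2})'$. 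When $Q$ is invertible, the Sherman--Morrison formula for this rank-one update, with $u=\kn{\col}{2}$, gives $\Sigma^{-1}=Q^{-1}-\frac{\gamma(G)}{1+\gamma(G)u'Q^{-1}u}(Q^{-1}u)(Q^{-1}u)'$, and setting $a=Q^{-1}u$, $u'Q^{-1}u=a'Qa$ yields the displayed formula (which additionally requires $1+\gamma(G)a'Qa\neq0$ for $\Sigma^{-1}$ to exist; this is automatic when $\gamma(G)\ge0$ and $Q$ is positive definite). All of this passes verbatim to $\Gamma$, which is $\Sigma$ conjugated by the positive diagonal matrix $\diagonal{\sigma_1^{-1},\ldots,\sigma_s^{-1}}$: such a conjugation preserves the diagonal-plus-rank-one structure, positive (semi)definiteness, and the sign of every off-diagonal entry, so it suffices to argue for $\Sigma$.

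\emph{Part~\ref{com:ii}, case $\gamma(G)\le0$.} Since $\kn{c_i}{2}\ge0$, every off-diagonal entry $\gamma(G)\kn{c_i}{2}\kn{c_j}{2}$ of $\Sigma$ is $\le0$, so $\Sigma$ and every principal submatrix $\Sigma_S$ is a symmetric $Z$-matrix; being a covariance matrix, $\Sigma$ is positive semidefinite, hence positive definite in the nondegenerate case where it is nonsingular, and then so is each $\Sigma_S$. A positive-definite symmetric $Z$-matrix is an $M$-matrix (directly from the definition, or from the quoted characterization since it has a nonnegative inverse), so each $\Sigma_S$, and by the remark above each $\Gamma_S$, is an $M$-matrix. \emph{Case $\gamma(G)\ge0$.} Here the off-diagonal entries of $\Sigma$ are $\ge0$, so I argue instead on $\Sigma^{-1}$, which is positive definite. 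By Part~\ref{com:i}, for $i\neq j$ its off-diagonal entry is $-\frac{\gamma(G)}{1+\gamma(G)a'Qa}\cdot\frac{\kn{c_i}{2}\kn{c_j}{2}}{q_iq_j}$, equivalently $-\gamma(G)\kn{c_i}{2}\kn{c_j}{2}\big(\prod_{k\neq i,j}q_k\big)/\det\Sigma$ (a polynomial identity, valid even when $Q$ is singular). With $\gamma(G)\ge0$ and $\kn{c_i}{2}\ge0$ this is $\le0$ provided $Q$ is positive semidefinite, for then $\prod_{k\neq i,j}q_k\ge0$ and $a'Qa\ge0$ makes the scalar factor $\ge1$; so $\Sigma^{-1}$ is a $Z$-matrix, and as the $Z$-matrix property and positive-definiteness are both inherited by principal submatrices, every $(\Sigma^{-1})_S$ is an $M$-matrix, and similarly for $\Gamma^{-1}$.

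\emph{Main obstacle.} The one substantial point, left open above, is the claim that when $\gamma(G)\ge0$ and $\Sigma$ is positive definite one has $q_i\ge0$ for all $i$. This is not a soft consequence of $\Sigma$ being positive definite: that only forces $Q$ to be positive definite on the hyperplane $\{x:(\kn{\col}{2})'x=0\}$, i.e.\ that \emph{at most one} $q_i$ is nonpositive --- which already follows from positivity of the $2\times2$ principal minors, giving $\sigma_i^2\sigma_j^2>\gamma(G)^2(\kn{c_i}{2}\kn{c_j}{2})^2$ and hence no two indices with $q\le0$ --- and for an isolated large class $q_i<0$ genuinely can happen. Eliminating that last possibility must use the explicit expressions \eqref{eq:varianza} for $\sigma_i^2$ and \eqref{eq:gamma} for $\gamma(G)$ together with $\sum_i c_i=n$; a direct computation reduces $q_i$ to the form, affine in $c_i$,
$$q_i=\frac{\kn{c_i}{2}}{\kn{n}{4}}\Big(m(n-2)(n-3)+2\pi_3(G)\,(c_i(n+1)-2n)-2m(m-1)(2c_i-3)\Big),$$
and the remaining work is to play this against positive-definiteness of the larger principal submatrices (or to identify the extra structural hypothesis forcing it nonnegative). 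I expect this to be the crux; everything else is bookkeeping.
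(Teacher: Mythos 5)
Your part~(i) and the $\gamma(G)\le 0$ half of part~(ii) are correct and essentially coincide with the paper's argument (the paper gets positive definiteness of the principal submatrices from Cauchy interlacing; heredity of positive definiteness under taking principal submatrices is the same fact). The ``main obstacle'' you isolate is precisely the step the paper's own proof skips: for $\gamma(G)\ge 0$ the paper asserts that $\Sigma^{-1}$ is of the form $L+\lambda uu'$ with $u\in\R^s_+$ and $\lambda\le 0$, which presupposes $a=Q^{-1}\kn{\col}{2}\ge 0$ and $1+\gamma(G)(\kn{\col}{2})'Q^{-1}\kn{\col}{2}>0$, i.e.\ exactly that all $q_i>0$; no justification is given there. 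Your displayed reduction of $q_i$ to an expression affine in $c_i$ is correct.

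Your suspicion that $q_i\ge 0$ is not a soft consequence of positive definiteness is right, and in fact the missing step cannot be supplied: $q_i<0$ does occur. Take $G=C_n$, the $n$-cycle, so $m=n$, $\pi_3(G)=n$ and $\gamma(C_n)=\frac{1}{(n-1)^2(n-2)}>0$. Your formula for $q_i$ then collapses to $q_i=\frac{\kn{c_i}{2}}{\kn{n}{4}}\,n(n-3)(n-2c_i)$, which is negative as soon as $c_i>n/2$. Concretely, for $C_{12}$ with profile $\col=(8,2,2)$ one finds $\gamma=\frac{1}{1210}$, $q_1=-\frac{1232}{605}<0$, $q_2=q_3=\frac{8}{55}>0$, and $\Sigma$ is positive definite: its leading $1\times 1$ and $2\times 2$ minors are positive and $\det\Sigma=\det Q\cdot\bigl(1+\gamma\,(\kn{\col}{2})'Q^{-1}\kn{\col}{2}\bigr)=\bigl(-\tfrac{78848}{1830125}\bigr)\bigl(-\tfrac{5}{22}\bigr)>0$. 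But then, by your own polynomial identity, $(\Sigma^{-1})_{23}=-\gamma\,\kn{c_2}{2}\kn{c_3}{2}\,q_1/\det\Sigma>0$, so neither $\Sigma^{-1}$ nor its principal submatrix indexed by $S=\{2,3\}$ is a $Z$-matrix, hence neither is an $M$-matrix. So the $\gamma(G)\ge 0$ half of statement~(ii) is false as stated; it holds only under the additional hypothesis $q_i\ge 0$ for all $i$ (e.g.\ when no colour class is too large), which must either be added to the theorem or verified for the instance at hand. In short, you have not missed an idea that the paper has: you have located a genuine error in the theorem and its proof. (One residual nit: your parenthetical that invertibility of $\Sigma$ is ``automatic'' needs $Q$ positive definite, as you say; in the regime above the correction factor is $-\tfrac{5}{22}\ne 0$, so $\Sigma$ is still invertible, but not for the reason quoted.)
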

\begin{proof}
	Statement \ref{com:i} follows straightforwardly by rewriting equations \eqref{eq:varianza} and \eqref{eq:scalar_cov} in matrix form and then by applying the Sherman-Morrison formula \cite{gol}. As for the last statement, observe that if $\Sigma$ is positive definite, then so is $D=\diagonal{\sigma^2_1,\ldots,\sigma^2_s}$. Hence, replacing $\Sigma$ by $D^{-\frac{1}{2}}\Sigma D^{-\frac{1}{2}}$, affects neither the sign of $\gamma(G)$ nor the property of being a rank 1 update of a diagonal matrix. It remains to prove Statement \ref{com:ii}. Both $\Sigma$ and $\Sigma^{-1}$ are positive definite matrices and both are rank 1 updates of a diagonal matrix, namely, they are of the form $L+\lambda uu'$, where $L$ is a diagonal nonsingular real matrix, $\lambda$ is a real number, and $u\in \R^s_+$. Therefore, for every nonempty subset $S$ of $[s]$ it holds that $(L+\lambda uu')_S=L_S+\lambda u_Su_S'$ and $u_S>0$. Let $A_S=L_S+\lambda u_Su_S'$ It can be seen in various way (e.g., by the \emph{Cauchy interlacing eigenvalues Theorem}, \cite{horn}) that $A_S$ is a symmetric definite positive matrix for every nonempty subset $S$ of $[s]$. Therefore if $\lambda\leq 0$, then $A_S$ is a positive definite $Z$-matrix and hence $A_S$ is an $M$-matrix for every nonempty subset $S$ of $[s]$.
\end{proof}
In view of the  theorem, finding $\Sigma$, $\Gamma$ and their inverses can be accomplished very efficiently, in fact in optimal $O(s^2+m)$ worst-case time, where $s$ is usually a small constant: computing $\gamma(G)$ requires $O(m)$ and due to the structure of the covariance matrix, $O(s^2)$ is needed to write $\Sigma$ or $\Gamma$, to perform inversion and vector multiplication.
\begin{remark}\label{rem:uncorr}
Beside the sign, the absolute value of $\gamma(G)$ is also quite informative. Indeed, by \eqref{eq:gamma}, if $G$ is sparse enough, like the graphs considered in Section \ref{sec:applications}, $|\gamma(G)|$ is very small and the covariance is nearly zero. Consequently, $M$ and $Z$ have essentially uncorrelated marginals so that $\Sigma$ is very close to the diagonal matrix $D$.  
\end{remark}

\section{Measuring homophily: a class of indices for network homophily}\label{sec:H}
We have already defined the homophily index $a(G,f)$ in Section \ref{sec:homophily_in_random_coloring_models}. Since the index depends only on the observed $z$-scores and the correlation matrix of $M$, after the results of the previous section, we see that $a(G,f)$ can be computed very efficiently. The rationale behind the definition of index $a$ is that if $G$ is homophilic with respect to $f$, then this behavior should be exhibited globally: either in most classes or in a few classes but in an extremely significant way. The same applies to anti-homophily. We can therefore view the arithmetic mean $A(z)$ of the observed $z$-scores as a \emph{score} of the homophily of $G$ with respect to $f$ and $a(G,f)$ as a measure of the statistical significance of such a score. Moreover, as seen in Section \ref{sec:homophily_in_random_coloring_models}, $A(z)$ is a real valued non decreasing affine function of the observed outcome $(m_1,\ldots,m_s)'$ so that, in the same vein, we can define and efficiently compute an entire class of indices simply by choosing a different score in the form of an affine non decreasing real valued function $\Theta$. The recipe goes as follows. Let $w\in \R^s$ have non negative components and consider the linear map  $\R^s\ni x\mapsto w'x\in\R$. This map clearly restricts to a measurable function of $\Phi_\col$ so that, by linearity, the score $\Theta$ defined by $\Theta(M)=w'(M-\am{M})$, describes a centered random variable with covariance matrix 
\begin{equation}\label{eq:cov_theta}
\Sigma_\Theta=w'\Sigma w 
\end{equation}
where $\Sigma$ is the covariance matrix of $M$. Clearly $\Theta$ is a non decreasing real valued function in the coordinates of the observed outcome $(m_1,\ldots,m_s)'$. Therefore, by mimicking the construction of $a(G,f)$ and after setting $y=(m_1-\am{M_1},\ldots,m_s-\am{M_s})'$, we can associate with the score $\Theta$ the index $j_\Theta(G,f)$, varying in $[-1,1]$, defined by
\begin{equation}\label{eq:general_score}
	j_\Theta(G,f)=\text{sgn}(\Theta(y))\frac{(\Theta(y))^2}{(\Theta(y))^2+w'\Sigma w}.
\end{equation}
By construction, for any choice of the score $\Theta$ and hence of the weighting $w$, index $j_\Theta(G,f)$ meets the requirements in \eqref{com:1} and (approximately) the requirement in \eqref{com:2} of Section \ref{sec:homophily_in_random_coloring_models}. Index $j_\Theta(G,f)$ can be specialized in several reasonable ways. For instance, according to the the terminology of \cite{chakra}, typical choices for the score $\Theta$ could be based on    
\begin{itemize}
	\item[--] the \emph{edge-inside fraction}, also called \emph{homophily ratio}, defined as $m_i/m$; this amounts to choose $w=\mathbf{1}'/m$ so as to have a score in the range $[0,1]$;
		\item[--] the \emph{average internal degree} defined as $2m_i/c_i$; this amounts to choose $w$ proportional to $2(\frac{1}{c_1},\ldots,\frac{1}{c_s})'$ by a coefficient $\nu$; possible choices for $\nu$ are: the reciprocal of the maximum degree $\Delta(G)$ of $G$, the reciprocal of the number of color classes,  $\frac{1}{s}$ or even the reciprocal of the average degree $\overline{d(G)}=\frac{2m}{n}$;   
	\item[--] the \emph{internal density or dyadicity} defined as $2m_i/\kn{c_i}{2}$;  this amounts to choose $w$ proportional to $2(\frac{1}{\kn{c_1}{2}},\ldots,\frac{1}{\kn{c_s}{2}})'$ by the coefficient $\frac{1}{s}$ so to have a score in the range $[0,1]$.
	\end{itemize}
We can also consider weighted version of the score $A(z)$ (which leads to index $a(G,f)$) simply by defining $\Theta(z)=\sum_{i=1}^sw_iz_i$ with $\sum_{i=1}^sw_i=1$. As an illustration consider the homophily-ratio based score which we denote by $R(G,f)$. Hence $R(G,f)=\frac{1}{m}\sum_{i=1}(m_i-\am{M_i})$ and $\mathfrak{r}(G,f)=\frac{1}{m}\sum_{i=1}m_i$ is the descriptive homophily ratio \cite{Zhu_et_al} which, in our model, is viewed as the observed sample of the random variable $\mathfrak{R}=\frac{1}{m}\sum_{i=1}M_i$. In view of \eqref{eq:general_score}, the corresponding homphily index $r(G,f)$ is defined by $$r(G,f)=\text{sgn}(R)\frac{\left(\sum_i(m_i-\am{M_i})\right)^2}{\left(\sum_i(m_i-\am{M_i})\right)^2+\mathbf{1}'\Sigma\mathbf{1}}$$
and, gives an upper on the statistical significance of the tails $\left(\mathfrak{R}\geq \mathfrak{r}(G,f)\right)$ and $\left(\mathfrak{R}<\mathfrak{r}(G,f)\right)$ according to, respectively, if $R(G,f)\geq 0$ or $R(G,f)<0$. 
\mybreak
Besides the quantifiers defined above, there are other possibilities for a global assessing of homophily in the form of estimated significance of an appropriate distance from the expected outcome $\am{M}=(\am{M_1},\ldots,\am{M_s})'$ of the observed outcome $(m_1,\ldots,m_s')$ sampled from the random outcome $M=(M_1,\ldots,M_s)'$. The most celebrated and widely used such distance is the \emph{Mahalanobis distance}. For a random vector $X=(X_1,\ldots X_s)'$ with expectation $\am{X}=(\am{X_1},\ldots,\am{X_s})'$ and definite positive covariance matrix $\Lambda$,  the Mahalanobis distance of $X$ from a particular point $x$ in the support of $X$ is the distance $d_M(X,x)=\sqrt{(x-\am{X})'\Lambda^{-1}(x-\am{X})}$. This distance actually measures how far the point $x$ is from being typical where typical means lying in ellipsoid centered at $\am{X}$ whose axes are the principal components of $X$. Notice that if $X$ is a centered random vector whose marginals have unit variance, then $d_M(X,x)=\|x\|_{\Lambda^{-1}}$, i.e, the Mahalanobis distance of a standardized random vector from a point $x$ in the support of its distribution, coincides with the vector norm induced by the correlation matrix $\Lambda^{-1}$. In particular, if, as usual, $Z$ and $z$ are the $z$ score and the observed $z$ score, respectively, corresponding to the input colored network $(G,f)$, then $d_M(Z,z)=\|z\|_{\Gamma^{-1}}$, where $\Gamma$ is the correlation matrix of $M$. Furthermore, $d_M(Z,z)$ is the Euclidean norm of $z$ whenever $M$ has uncorrelated marginals. For simplicity, we denote $d_M(Z,z)$ simply by $\|z\|$ and refer to it as the Mahalanobis norm of $z$. While it is clear that the higher $\|z\|$ is, the less typical is the observed result $(m_1,\ldots,m_s)$, $\|z\|$ is not a score in the previously defined sense, since $\|z\|$ is not generally monotone even in the absolute values of the coordinates, i.e., it is not true that
$$|z'_i|\leq |z_i|, \,\forall i\in [s]\Rightarrow \|z'\|\leq \|z\|.$$        
However, if $G$ is sparse enough so that $M$ has nearly uncorrelated marginals and $\|z\|$ is close to the Euclidean norm of $z$ (recall Remark \ref{rem:uncorr}), or $G$ is over-dispersed so that $\Gamma$ has non-negative entries, then the above implication holds (at least approximately in the first case). In these cases, $\|z\|$ can be taken as a homophily score, at least in a weaker sense. Similar to the homophily indices defined above, a quantifier of the homophily of $G$ with respect to $f$ based on the weak score $\|z\|$ is obtained by estimating from above the significance of $\|z\|$ under the random coloring model. By a multidimensional version of Chebyshev's inequality (\cite{Chen}) one has 
$$\pra{\|Z\|\geq \|z\|}\leq \frac{s}{\|z\|^2}=\frac{s}{z'\Gamma z},$$   
therefore 
 $$h(G,f)=\max\left\{0,\frac{\|z\|^2-s}{\|z\|^2}\right\}$$
can be taken as a quantifier of homophily in the interval $[0,1]$: values of $h$ close to 1, indicate that the observed outcome is very far from its expected value along the directions most responsible for the variance structure of $M$. However this index alone does not distinguish  between anti-homophilic and homophilic behavior and the analysis must be refined by inspecting the sign pattern of $z$. What is true however, is that if the value of $h$ is positive and small, then the observed outcome is typical.   
\mybreak
Given the results of the previous section, it is clear that both the indices $j_\Theta(G,f)$ (for any choice of weighting vector $w$) and $h(G,f)$ can be computed in optimal $O(s^2+m)$ worst-case time.

\section{Discussion, examples, and applications}\label{sec:applications}
The novelty of our approach is to consider the input colored (partitioned) network as a random object, where the randomness is inherited from random colorings rather than random graphs, and the way we exploit this modeling choice via tail inequalities. While this is nothing more than a reasonable modeling choice, it has some profound implications:
\begin{enumerate}[label=(\Alph*)]
	\item\label{com:A} we gain more information about the significance of the observed outcome when comparing our indices with other consolidated homophily scores. In fact, the significance of the latter cannot generally be assessed per se but rather by the comparisons with other benchmark (possibly randomly generated) instances. Moreover, even locating the values of these scores on a universal scale by comparing them to their respective maxima is a far from trivial problem, the corresponding optimization problems being typically NP-complete (see Theorem \ref{thm:nphardmod} below); rather, our indices designed to provide a measure of their significance by default, always (and tightly) sit in the interval $[-1,1]$; in this sense their scale is universal; 
	    
	\item\label{com:B} due to the simplicity of the random coloring model, we gain the possibility of sharper analysis with respect to random graph models. Recall, that even estimating the edge probabilities in random graph models, such as $\mathcal{G}(d)$, can be challenging \cite{GaoO}. In contrast, the representation of $M$ as a sum of identically distributed Bernoulli random variables allows even the exact computation of the second-order mixed moments of $M$, which allows us to define a whole class of homophily quantifiers, each based on a particular homophily score defined as an affine function of the data or even quantifiers based on the distance from the most typical outcome of the observed one.     
\end{enumerate}
Let us elaborate on these points with an example and real-world applications. Before doing so, we give a formal proof that maximizing the modularity and homophily ratio index over the labeled partition of a given profile are difficult problems. Let $\col$-MODULARITY and $\col$-HOMOPHILY RATIO be the problems of maximizing the modularity and the homophily ratio index, respectively, over the labeled partition of profile $\col$.  
\begin{theorem}\label{thm:nphardmod}
	The decision versions of $\col$-MODULARITY and $\col$-HOMOPHILY RATIO, are NP-Complete problems.
\end{theorem}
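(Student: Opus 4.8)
The plan is to derive both NP-completeness assertions from a single construction. Membership in NP is immediate in both cases: a labeled partition of profile $\col$ is a certificate of polynomial size, and from it the quantities $m_1,\dots,m_s$ and $D_1,\dots,D_s$, hence the homophily ratio $\tfrac1m\sum_i m_i$ and the modularity $\mathfrak{q}(f)=\sum_i\bigl(\tfrac{m_i}{m}-(\tfrac{D_i}{2m})^2\bigr)$ of \eqref{eq:utile}, are computable in polynomial time. So the content of the theorem is the NP-hardness of the two maximization problems.

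For $\col$-HOMOPHILY RATIO I would reduce from CLIQUE. Given an instance $(G,k)$ with $|V(G)|=n$, keep $G$ and take the profile $\col=(k,1,1,\dots,1)$ with $s=n-k+1$ classes. In any labeled partition of profile $\col$ every class other than the size-$k$ class $C_1$ is a singleton and induces no edge, so $\sum_i m_i=e(C_1)$, the number of edges induced by $C_1$; hence the maximum homophily ratio over profile-$\col$ partitions equals $\tfrac1m\max_{|C_1|=k}e(C_1)$, which is $\binom{k}{2}/m$ if and only if $G$ contains a clique on $k$ vertices. Thus $(G,\col,\binom{k}{2}/m)$ is a yes-instance of the decision version of $\col$-HOMOPHILY RATIO if and only if $(G,k)$ is a yes-instance of CLIQUE. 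An equally short alternative takes $s=2$ and the balanced profile $(n/2,n/2)$, where $\sum_i m_i=m-(\text{size of the cut})$, turning $\col$-HOMOPHILY RATIO into MINIMUM BISECTION, which is NP-complete.

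For $\col$-MODULARITY the profile $\col=(k,1,\dots,1)$ is still the natural choice, but now the degree-penalty term $\sum_i(D_i/2m)^2$ also depends on the partition, through $D_1=\sum_{v\in C_1}d_v$, so on an arbitrary $G$ maximizing $\mathfrak{q}(f)$ is not the same as maximizing $e(C_1)$. I would neutralize this by running the reduction on a \emph{regular} host graph: if $G$ is $d$-regular then, for every profile-$\col$ partition, $D_1=dk$ and $\sum_{v\notin C_1}(d_v/2m)^2=(n-k)(d/2m)^2$, so $\sum_i(D_i/2m)^2$ collapses to a constant $c=c(n,k,d,m)$ independent of the partition, and $\mathfrak{q}(f)=\tfrac{e(C_1)}{m}-c$. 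Its maximum over profile-$\col$ partitions is therefore $\binom{k}{2}/m-c$ exactly when $G$ has a $k$-clique, and since $c$ is a polynomial-size rational this is a bona fide polynomial-time reduction. It only remains to invoke the classical fact that CLIQUE is still NP-complete when restricted to regular graphs — which follows because INDEPENDENT SET (equivalently VERTEX COVER) is NP-complete on cubic graphs, the complement of a cubic graph on $n$ vertices is $(n-4)$-regular, and complementation carries independent sets of size $k$ to cliques of size $k$.

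The single delicate point — and the one I would be most careful about — is this modularity step: the modularity index is \emph{not} a monotone function of $(m_1,\dots,m_s)$ alone once the partition is free to redistribute degree mass, so one must get rid of the $\sum_i(D_i/2m)^2$ contribution. Restricting to regular host graphs does exactly this (and, incidentally, makes $\col$-MODULARITY and $\col$-HOMOPHILY RATIO differ only by an additive constant over partitions of a fixed profile), at the cost of importing the hardness from CLIQUE on regular graphs rather than from plain CLIQUE; pinning down that restricted NP-completeness cleanly is the only loose end.
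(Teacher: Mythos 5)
Your proof is correct, but it follows a genuinely different route from the paper's. The paper uses a single reduction for both problems: it restricts to cubic graphs with the balanced profile $\col=(n/2,n/2)$, observes that on a regular graph with fixed profile the degree-penalty term of \eqref{eq:utile} is constant so that $\mathfrak{q}$ and $\mathfrak{r}$ differ by an additive constant, and notes that maximizing either is then exactly MIN BISECTION ON CUBIC GRAPHS, which is NP-complete by Bui--Chaudhuri--Leighton--Sipser. You instead reduce from CLIQUE with the skewed profile $(k,1,\dots,1)$, handling the homophily ratio on arbitrary graphs and the modularity on regular host graphs (importing NP-completeness of CLIQUE on regular graphs via complementation of cubic INDEPENDENT SET). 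The essential trick is the same in both arguments --- regularity neutralizes the $\sum_i(D_i/2m)^2$ term, which is the one obstacle to treating modularity as a monotone function of $(m_1,\dots,m_s)$ --- and your ``loose end'' (NP-completeness of CLIQUE on regular graphs) is in fact closed by the complementation argument you give, so nothing is missing. The paper's version is more economical (one reduction, one citation, and it also notes your balanced-bisection alternative for the ratio); yours buys hardness already for the extreme profile $(k,1,\dots,1)$, i.e.\ essentially for densest-$k$-subgraph-type instances, which complements the balanced case and dovetails with the paper's own CLIQUE-based argument for the inapproximability of the tail probabilities in Section \ref{sec:homophily_in_random_coloring_models}.
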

\begin{proof}
	Instances of both problems are of the form $(G,\col)$ together with the value $p$ of modularity (respect., homophily ratio) where $G$ is an undirected graph of order $n$ and $\col$ is an $s$-tuple of positive integers that add up to $n$, and we are asked to decide whether a partition of vertices with profile $\col$ exists giving value at least $p$ for the index. Consider the subset of instances where $G$ is cubic, i.e., each vertex has degree 3, and $\col=(\frac{n}{2}, \frac{n}{2})$. Note that $n$ is even because $G$ is a regular graph of odd degree. The set of feasible partitions is thus restricted to the set of partitions into two classes with the same number of vertices. In this case, given \eqref{eq:utile}, $\mathfrak{q}$ and $\mathfrak{r}$ differ by an additive constant. Therefore both indices are maximized when the total number of edges induced by the two classes is maximized and, equivalently, when the number of edges having exactly one end in each of the two classes is minimized. The latter problem is MIN BISECTION ON CUBIC GRAPHS, whose decision version is known to be NP-Complete~\cite{Buitetal}. This proves that $\col$-MODULARITY and $\col$-HOMOPHILY RATIO are hard by restriction, so they are hard in general.
\end{proof}
\subsection{A toy example}\label{subs:toy}
This nice and meaningful example was suggested by an anonymous referee to highlight the difference between modularity and our indices based on tail inequalities. Consider a graph $G$ of order $n$ consisting of $m$ disjoint edges where $m$ is an even number. Hence $n=2m$ and the degree sequence of $G$ is the $2m$-dimensional all ones vector $\mathbf{1}$. Let us analyze the indices over all colorings $f$ of profile $(m,m)$ or, equivalently, over all partitions $\pt$ of $[n]$ into $m$ red vertices and $m$ blue vertices. In this case there is no ambiguity in establishing whether an instance $(G,f)$ is more homophilic than another instance $(G',f')$, where $G=G'$ in the random coloring model, and $f=f'$ in the configuration model. The only problem here, is how to assess whether or not the given instance is homophilic per se. Let $M_r$ and $M_b$ be the random number of homophillic red and blue edges, respectively under the configuration model and the random coloring model. We use the same symbols for random variables on different probability spaces to facilitate comparisons. A random pairing of the vertices of $G$ is called a (simple) configuration. Let $\mathcal{G}(\mathbf{1})$ be the set of graphs on $[n]$ with degree sequence $\mathbf{1}$ and let us equip this set with the uniform distribution (this yields the configuration model in this case). All graphs in $\mathcal{G}(\mathbf{1})$ are simple and the probability of seeing a particular configuration by uniform sampling is $\frac{m!2^m}{2m!}$. For integers $h$ and $k$ such that $0\leq h\leq \frac{m}{2}$ and $0\leq k\leq \frac{m}{2}$, the event $(M_r=h,M_b=k)$ has probability 0 in both probability spaces whenever $h\not=k$. This because edges whose end-vertices have different colors match the same number of vertices in both color classes. Therefore the joint distribution of $(M_r,M_b)$ is supported by the diagonal $\{(k,k), 0\leq k\leq m/2\}$ in both models. Hence
$$\pra{M_1\geq k|\text{conf}}=\pra{M_2\geq k |\text{conf}}=\pra{M_1\geq k, M_2\geq k |\text{conf}}$$
and 
$$\pra{M_1\geq k|\text{rand-col}}=\pra{M_2\geq k |\text{rand-col}}=\pra{M_1\geq k, M_2\geq k |\text{rand-col}}.$$
Actually, it can be shown more, namely,
\begin{equation}\label{eq:toy}
\pra{M_i\geq k|\text{conf}}=\pra{M_i\geq k|\text{rand-col}}=\frac{2^m(m!)^2}{(2m)!}\sum_{t=k}^{m/2}\left(m \atop t,t\right)2^{-2t},\quad i\in \{b,r\}
\end{equation}
where $\left(m \atop t,t\right)$ is the multinomial coefficient of $m$ with parts $t, t, m-2t$.
That is, even though the random variables are defined on different probability spaces, their tails have the same measure so that we can omit the reference to the probability space and denote by $F(k)$ the probability $\pra{M_i<k}=1-\pra{M_i\geq k}$ where $\pra{M_i\geq k}$ is given by \eqref{eq:toy}. Either by the relation above or by direct computation, it follows that 
$$\E_{\text{conf}}(M_r/m)=\E_{\text{conf}}(M_b/m)=\E_{\text{rand-col}}(M_r/m)=\E_{\text{rand-col}}(M_b/m)=\frac{m(m-1)}{2m(2m-1)}=\frac{m-1}{2(2m-1)}.$$   
In fact, since each configuration has probability $\frac{m!2^m}{2m!}$, the probability of seeing an edge connecting two particular vertices is $\frac{1}{2m-1}$ rather than $\frac{1}{2m}$. However, the difference between $\frac{1}{2m-1}$ and $\frac{1}{2m}$ is negligible. In fact $\frac{m-1}{2(2m-1)}$ is close to $\frac{1}{4}$ when $m$ is large. We can thus stick to the definition of $\mathfrak{q}$ given in Section \ref{sec:homophily_in_random_models}. Therefore, by \eqref{eq:modularity}, the modularity of any observed partition $\pt(k)$ with $k$ red edges and $k$ blue edges---recall that no partition with a different number of red and blue homophilic edges can be observed---is
$$\mathfrak{q}(\pt)=2\left(\frac{k}{m}-\frac{1}{4}\right).$$
In conclusion, Newman's modularity interpolates linearly between $-1/2$, corresponding to the perfectly anti-homophilic case, and $1/2$, corresponding the perfectly homophilic case. Clearly the same result would have been obtained by taking expectations with respect to the random coloring model. In this particular case, however, the analysis provided by these indices is not sharper than what the simple homophily ratio $\mathfrak{r}$ provides: $\mathfrak{r}(\pt(k))=\mathfrak{q}(\pt(k))+1/2$. On the other hand, by looking at Figure \ref{Fig:toy}, one can appreciate the information gain provided by the indices based on concentration inequalities. The figure refers to a matching with $1000$ vertices, $500$ edges and coloring profile $(500,500)$. Since in every coloring of profile $(500,500)$ the number of  red edges is the same as  the number of blue edges, all the indices $\mathfrak{q}$, $\mathfrak{r}$, and $a$, are actually real valued functions defined on the integers between $0$ and $250$. For $\mathfrak{q}$ and $\mathfrak{r}$ this is clear by their definition, while for $a$ it suffices to observe that the arithmetic mean of the observed $z$-scores corresponding to the observed outcome $(k,k)$, coincides with the marginal $z$-score of the red edges (or, equivalently, of the blue edges). In particular, the plot of 
$\mathfrak{q}$ and $\mathfrak{r}$ are (parallel) straight lines, while $a$ is of the form $\frac{z^2(k)}{1+z^2(k)}$ with $z(k)$ being the marginal $z$-score corresponding to $k$ red edges. This $z$-score is itself an affine function of $k$. In the figure we plot the corresponding values, as $k$ runs between $0$ and $250$, against the plot of the (exact) probability $F(k)$. 
\begin{figure}[H]
\begin{center}
\includegraphics[width=15cm]{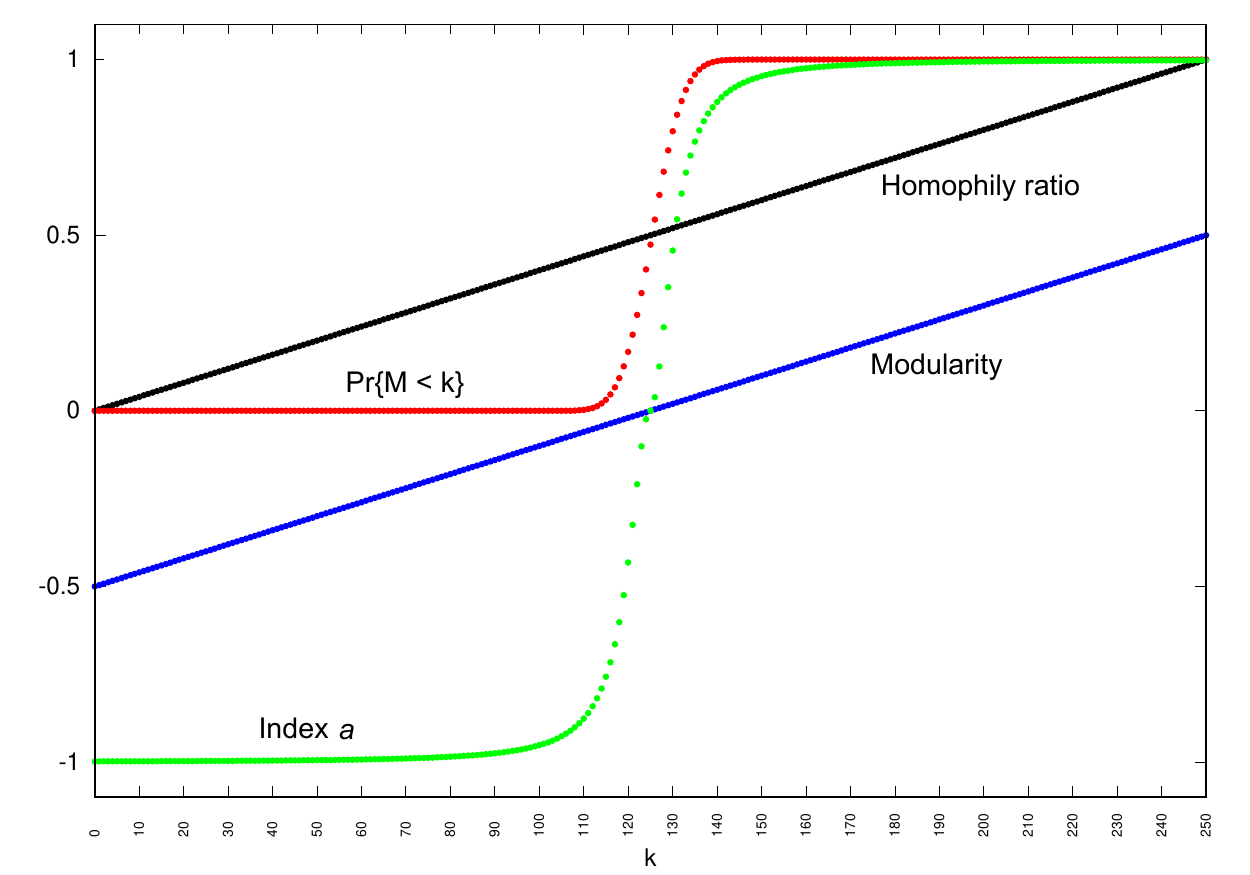}
\caption{Theoretical probability $\pra{M < k}$ (red) for the matching example in Section~\ref{subs:toy} compared against Homophily Ratio (black),  Modularity (blue) and Index $a$ (green). Values are reported as a function of $k$. Experiments refer to an instance with 500 edges and coloring profile (500, 500).}
\label{Fig:toy}
\end{center}
\end{figure}

By inspecting the plots of $F(k)$ it is clear that values of $k$ (and hence the observed outcomes of the form $(k,k)$) outside the interval $[110,140]$ are extremely unlikely, meaning anti-homophily (left tail) or homophily (right tail). Neither $\mathfrak{q}$ nor $\mathfrak{r}$ can detect this phase transition, while index $a$ clearly does. In conclusion, the situation is much the same as deciding whether the number of heads in a long series of coin tosses is consistent with the assumption that the coin is fair: $\mathfrak{q}$ and $\mathfrak{r}$ would only count the fraction of heads, while $a$ would estimate the likelihood of the sequence.

\subsection{Measuring homophily on real networks.}\label{subsec:real_net} 
If we want to decide whether a particular categorical property correlates with the structure of the network, we simply encode the property as a coloring and apply one of the proposed indices. Here we concentrate on $a(G,f)$ and $r(G,f)$. To illustrate this application we implemented a prototype function in Python 3 for computing these indices and run it on five instances of the form $(G,f)$. In the first four instances $G$ is a Protein Protein Interaction (PPI) network: each vertex of the network represents a protein while edges represent interactions between proteins in different organisms. More specifically, for the first three instances, $G$ is one of \emph{Treponema pallidum}, \emph{Escherichia coli} and \emph{Saccharomyces cerevisiae}---retrieved from STRING data-base (\texttt{https://string-db.org/})---while in the fourth instance, $G$ is the \emph{Homo sapiens} PPI network. In the last instance, $G$ is the network \emph{Pokec} which describes symmetric interactions among users of a popular Slovak social network. 
The coloring $f$ in the first three instances associates each vertex with its functional class according to the standard taxonomy of the NCBI database (ftp://ftp.ncbi.nih.gov/pub/COG/COG/). As explained in detail in \cite{apollonio2022}, the range of $f$ consists of 19 colors rather than 24. In the fourth instance, $f$ associates each protein with the chromosome the related gene belongs to. Finally, in the last instance, $f$ associates each user with one out of the five classes of (declared) age. For biological reasons, the first four networks are inherently homophilic with respect to the chosen colorings and there are reasons to expect that the same is true for the social network. The results of our experiments, summarized in Tab.~\ref{tabellareti}, confirm these expectations. The second and the third columns of Tab.~\ref{tabellareti} correspond to the number of vertices $n$ and the number of edges $m$ of $G$, respectively; the other columns of the table report the values of the indices used in the comparison: the random homophily ratio based index $r(G,f)$, the arithmetic mean of the $z$-scores based index $a(G,f)$, the descriptive homophily ratio $\mathfrak{r}(G,f)$, and Newman's modularity $\mathfrak{q}(G,f)$. Actually, the values of $r(G,f)$ and $a(G,f)$ are complemented to 1 to give an immediate perception of the significance of the value of the corresponding statistic. For example, the last row of the fourth column of the table reads: the probability of observing, by chance, a coloring with a higher homophily ratio than the observed one is less than 1 in half a million---that is, much less than the significance level typically set at 5\%---. Since locating the maximum of $\mathfrak{r}$ and $\mathfrak{q}$ is a difficult optimization problem (see Theorem~\ref{thm:nphardmod} and recall \ref{com:A}), the significance of  these indices cannot be  assessed by comparisons to their respective maxima but rather by a comparison with the results in Tab.~\ref{tabellaretirandom} where we reported the values of all of the indices averaged over five randomly generated instances $(G,f_1),\ldots (G,f_5)$ where $G$ is kept fixed and the $f_i$'s are picked independently at random among the colorings of $G$ with the same profile as the input coloring $f$. Even without knowing the actual range of $\mathfrak{r}$ and $\mathfrak{q}$, it can be seen from Tab.~\ref{tabellaretirandom} that all four considered indices behave as expected, i.e., none of them deviates significantly from its expected reference value. Also the comparison between the last column of the two tables shows that the index $\mathfrak{q}$ (as well as $\mathfrak{r}$) suggests homophily with respect to the corresponding coloring of all five networks except Homo sapiens. This network is not considered to be significantly homophilic w.r.t.\ the given colouring by both the modularity index and the descriptive homophily ratio, while it is considered homophobic by both of our indices, consistent with the underlying biological reasons. Also note that although $n$, $m$ span a wide range and the edge density differs by up to three orders of magnitude, our indices assume fairly comparable values, confirming the consistency of our method. The time needed to compute our indices is negligible with respect to the I/O time for reading the graphs, confirming that both the $a(G,f)$ and $r(G,f)$ indices can be computed in linear time without using any special data structure, provided that the number of colors is $O(\sqrt{m})$ (which is not restrictive in concrete applications). 
\begin{table}[H]\label{tabellareti}
	\begin{center}
		\begin{tabular}{|c|r|r|r|r|r|r|r|r|r|r|}
			\hline
			\textbf{network}&\multicolumn{1}{|c|}{\textbf{vertices}}&\multicolumn{1}{|c|}{\textbf{edges}}&$1-r(G,f)$&$(1-a(G,f))10^6$&$\mathfrak{r}(G,f)$&$\mathfrak{q}(G,f)$\\
			\hline
			\emph{Treponema pallidum}
			&894
			&8,157
			&0.0014
			&160.8
			&0.545
			&0.3538\\
			
			\emph{Escherichia coli}
			&4,020
			&29,748
			&0.0015
			&11.1
			&0.507
			&0.3332\\
			
			\emph{Saccharomyces cerevisiae}
			&6,157
			&119,051
			&0.1233
			&18.0
			&0.498
			&0.2138\\
			
			\emph{Homo sapiens}
			&27,342
			&777,098
			&0.0025
			&1007.3
			&0.049
			&-0.0011\\
			
			\emph{Pokec}
			&1,212,349
			&8,320,600
			&1.6582e-06
			&3.3
			&0.466
			&0.1629\\
			\hline
			
			\hline
			
			\hline
		\end{tabular}
	\end{center}
	\caption{Indicators for  the five considered instances}
\end{table}

\begin{table}[H]\label{tabellaretirandom}
	\begin{center}
		\begin{tabular}{|c|r|r|r|r|r|r|r|r|}
			\hline
			\textbf{network}&$1-r(G,f)$&$(1-a(G,f))10^6$&$\mathfrak{r}(G,f)$&$\mathfrak{q}(G,f)$\\
			\hline
			\emph{Treponema pallidum}
&0.6715
&457,697
&0.161
&-0.00295\\
\emph{Escherichia coli}
&0.6487
&499,182
&0.467
&0.00028\\
\emph{Saccharomyces cerevisiae}
&0.7734
&539,713
&0.282
&-0.00013\\
\emph{Homo sapiens}
&0.5784
&709,707
&0.107
&-0.00024\\
\emph{Pokec}
&0.5720
&770,348
&0.257
&0.00003\\
			\hline
			
			\hline
			
			\hline
		\end{tabular}
	\end{center}
	\caption{Indicators for the randomised colouring of the  considered instances, averaged each on 5 random colorings}
\end{table}
\section{Conclusion}\label{sec:concl}
We proposed a novel inferential statistical approach, based on second-order tail inequalities, to quantify network homophily for an input $(G,\pt)$ consisting of a undirected graph $G$ and a given partition of its vertex set. After adhering to the accepted paradigm that whatever network homophily means, it is measured by a non-decreasing function of the edge densities of the subgraphs induced by the classes of $\pt$, we proposed to look at any such function, referred to as \emph{descriptive homophily index}, as a random variable defined on the probability space of the labeled partitions of the same profile as $\pt$ endowed with the uniform measure. This is the random coloring model introduced in \cite{apollonio2022} and formally refined in the present paper. Within the random coloring model, values of descriptive indices are just observed samples from the corresponding random variables described by picking labeled partitions at random. In light of this view, we proposed to quantify network homophily as the (signed) statistical significance ($p$-value) of the observed value under the null hypothesis of absence of homophily. The $p$-value of the observed value is the probability of the appropriate (right or left) tail of the corresponding random variable, namely, the probability of observing a more extreme value than the observed one. We showed that the problem of computing the $p$-value is hard to approximate, since the corresponding optimization problem is not even in the APX class. However we gave upper bounds on this $p$-value by mimicking the three sigma-rule of thumb, namely by resorting to tail inequalities such as Chebyshev's and Cantelli's inequalities. While from the one hand we have no control on the quality of the approximation of the $p$-value via these inequalities, on the other hand, in view of the inapproximability result, this is the best we can hope for unless $P=NP$. Estimating the $p$-value via tail inequalities requires the knowledge of covariance matrix of the random vector $M$, which was not previously known within the random coloring model. Due the simplicity of the model, in this paper we closed this gap by giving a closed form expression for the covariance matrix of $M$. In particular, we discovered that the correlation among the marginals of the random vector $M$ is ruled by a parameter $\gamma(G)$ whose sign is a graph property (hence $\gamma(G)$  is a graph invariant) and whose absolute value dictates the intensity of the correlations---for sparse enough graphs, the marginals are nearly uncorrelated---. Interestingly, this parameter depends on the input partition only through its profile and depends on the network only through its degree sequence,
 implying that both the covariance matrix and its inverse can be computed very efficiently leading to an overall complexity of $O(s^2+m)$ worst-case time for computing all the proposed indices. Our indices, that range in interval $[-1,1]$ for any graph, supply some gain of information when compared to other well established indices. The situation is much the same as deciding whether the number of heads in a long series of coin tosses is consistent with the assumption that the coin is fair. For this purpose, while classical indices only count  the fraction of heads, our indices estimate the likelihood of the sequence. This is confirmed by the case of the Homo sapiens PPI network with respect to the location of corresponding genes on chromosomes. For biological reasons, the Homo sapiens PPI network is inherently homophilic with respect to the given partition. However, neither Newman's modularity nor the homophily ratio (nor other descriptive indices) are able to detect this property, whereas ours are.   

\paragraph{Ackowledgments} We can say that we were very lucky with both referees. In dealing with their precise, detailed and decisive criticisms, we had to go deeper to understand the problem better. As a result, the presentation has definitely improved in both form and substance. We owe to one of the two anonymous referees the artificial example of Section \ref{subs:toy}, while we owe to both of them the improved computational results of Section \ref{subsec:real_net}.

\end{document}